\theoremstyle{plain}
\newtheorem{theorem}{Theorem}[section]
\newtheorem{lemma}[theorem]{Lemma}
\theoremstyle{definition}
\theoremstyle{remark}
\newtheorem{remark}{Remark}
\begin{document}

\title[Comparison of Convolution-Type Wave Equations ]{A Comparison of Solutions of Two Convolution-Type  Unidirectional Wave Equations}

\author[H. A. Erbay]{H. A. Erbay}

\address{Department of Natural and Mathematical Sciences, Faculty of Engineering, Ozyegin University, \\  Cekmekoy 34794, Istanbul, Turkey}
\email{husnuata.erbay@ozyegin.edu.tr}

\author[S. Erbay]{S. Erbay}

\address{Department of Natural and Mathematical Sciences, Faculty of Engineering, Ozyegin University, \\  Cekmekoy 34794, Istanbul, Turkey}
\email{saadet.erbay@ozyegin.edu.tr}

\author[A. Erkip]{A. Erkip}

\address{Faculty of Engineering and Natural Sciences, Sabanci University, \\ Tuzla 34956,  Istanbul,    Turkey}
\email{albert@sabanciuniv.edu}

\begin{abstract}
    In this work, we prove  a comparison result for  a general class of nonlinear  dispersive unidirectional wave equations. The dispersive nature of one-dimensional waves occurs because of  a convolution integral in space.      For two specific choices of  the kernel function, the Benjamin-Bona-Mahony equation   and the Rosenau equation that are particularly suitable to model  water waves and elastic waves, respectively, are  two members of the class. We first prove an energy estimate for the Cauchy problem of the nonlocal unidirectional wave equation. Then, for the same initial data,  we consider two  distinct  solutions corresponding to two different kernel functions. Our main result is that  the difference between the  solutions remains small in a suitable Sobolev norm if the  two  kernel functions  have  similar dispersive   characteristics  in the long-wave limit. As a sample case of this comparison result, we  provide  the approximations to the hyperbolic conservation law.
\end{abstract}

\keywords{ Approximation; nonlocal wave equation;   Benjamin-Bona-Mahony equation; Rosenau equation; long wave limit}

 \subjclass[2000]{35A35, 35C20, 35E15, 35Q53}

\maketitle

\setcounter{equation}{0}
\section{Introduction}\label{sec1}

In this paper, we establish a comparison result for solutions to the Cauchy problem associated to the one-dimensional nonlocal nonlinear wave equation
\begin{equation}
     u_{t} +\alpha \ast  \left(u+u^{p+1}\right)_{x}=0,  \label{cont}
\end{equation}%
under the assumption that kernel functions have  similar dispersive   characteristics  in the long-wave limit. Here  $u=u(x,t)$ is a real-valued function, $p$ is a positive integer, $\alpha(x)$ is a general kernel function  and   the symbol $\ast$ denotes convolution in the $x$-variable. The linear dispersion relation $\xi \mapsto \omega(\xi)=\xi \widehat{\alpha }(\xi)$,  where $\widehat{\alpha }$ represents the Fourier transform of $\alpha$, shows that  the dispersive nature of waves  is directly related to the kernel function.    Since we intend  to confine our interest to   long-wave solutions,   we rewrite  (\ref{cont}) in the form
\begin{equation}
     u_{t}+\alpha_{\delta }\ast ( u+ u^{p+1})_{x}=0   \label{para}
\end{equation}%
by utilizing  the transformation $(x, t) \rightarrow (x/\delta, t/\delta)$  with  small parameter $\delta >0$ and by introducing  the family of kernels as $\alpha _{\delta }(x)=\frac{1}{\delta }\alpha (\frac{x}{\delta })$.   It is worth noting that, as $\delta \rightarrow 0$, the kernels $\alpha_{\delta}$ will converge to the Dirac measure  in the distribution sense and (\ref{para}) will formally approach  the hyperbolic conservation law $u_{t}+ ( u+ u^{p+1})_{x}=0$. Indeed, (\ref{para}) is a dispersive regularization of  the hyperbolic conservation law  which was widely studied.

 The  nonlocal equation (\ref{para}) with a general kernel function was  given in \cite{Erbay2021}  to provide a numerical treatment of nonlinear unidirectional waves with a nonlocal dispersion relation. With particular kernel functions,   (\ref{para}) has been proposed as a model in a wide range of physical contexts.  The Benjamin-Bona-Mahony (BBM) equation \cite{Benjamin1972}
\begin{equation}
    u_{t}+u_{x}-\delta^{2}u_{xxt}+ (u^{p+1})_{x}=0  \label{ib}
\end{equation}%
is the most well-known member of the one parameter family (\ref{para}). It corresponds to the exponential kernel $\alpha_{\delta}(x)= \frac{1}{2\delta}e^{- \vert x\vert/\delta}$ and models unidirectional propagation of small amplitude long waves in shallow water.  Another well-known member of    (\ref{para})  is the Rosenau equation  \cite{Rosenau1988}
\begin{equation}
   u_{t}+u_{x}+\delta^{4}u_{xxxxt}+ (u^{p+1})_{x}=0 \label{clas}
\end{equation}%
which corresponds to the kernel function
\begin{equation}
    \alpha_{\delta}(x)= {1\over {2{\sqrt 2}\delta}}e^{-{\vert x\vert\over {\sqrt 2}\delta}}\Big( \cos\big({{\vert x\vert}\over {{\sqrt 2}\delta}}\big) + \sin \big({{\vert x\vert}\over {{\sqrt 2}\delta}}\big) \Big ) \label{ros-ker}
\end{equation}
and models  propagation of longitudinal waves on a one-dimensional dense chain of particles. It is worth to mention here that the BBM and Rosenau equations  might be viewed as degenerate cases of the  family (\ref{para}), because in both cases $\alpha_{\delta}$ is  the Green's function of a differential operator. However, in a "genuinely nonlocal" case, this is not the case and  (\ref{para}) cannot  be transformed into a partial differential equation. We underline that we will establish our comparison result for (\ref{para}) in which the kernel functions may or may not be   the Green's function of a differential operator and we pose minimal restrictions on the kernel.

The primary purpose of this work is to prove a comparison result of solutions to (\ref{para})  in the weak dispersive regime and also is to show that the behavior of solutions is determined by the dispersive character of the kernel in the long-wave limit rather than the shape of the kernel function. In a recent work \cite{Erbay2020}, a similar comparison result was given for the nonlocal bidirectional wave equations. Based mainly on an energy estimate with no loss of derivative, our present comparison result extends basically the notion of "kernel based comparison" introduced in \cite{Erbay2020}  to the nonlocal unidirectional wave equation  (\ref{para}).

We first start by considering two different kernel functions with the same dispersive nature in the long-wave limit. Then, for these two kernels, we consider the corresponding  solutions to the Cauchy problem with the same initial value. We basically prove that the difference of the two solutions remains small in a suitable norm. We refer the reader to \cite{Bona2005,Constantin2009,Duchene2015,Lannes2013,Lannes2012} and the references therein for a detailed discussion of similar comparison results of many different physical models.

The structure of the paper is as follows. Section \ref{sec2} is devoted to the proof of an energy estimate. In Section \ref{sec3} we start with  the moment conditions to be satisfied by the two different kernels and prove the main result that establishes an estimate on the difference between  the corresponding  solutions. In Section \ref{sec4},  we illustrate our comparison result through a particular case in which one of the kernels is the Dirac measure.

Throughout the paper, we will use the standard notation  for  Lebesgue and Sobolev spaces.  The $L^p$ ($1\leq p <\infty$)  norm of $u$ on $\mathbb{R}$ is represented by  $\Vert u\Vert_{L^p}$ and the notation $L^{p}=L^{p}(\mathbb{R})$ is used. To denote the inner product of $u$ and $v$ in $L^2$, the symbol $\langle u, v\rangle$ is used. The Fourier transform  of $u$ is defined as $\widehat u(\xi)=\int_\mathbb{R} u(x) e^{-i\xi x}dx$.  The notation  $H^{s}=H^s(\mathbb{R})$ is used to denote the $L^{2}$-based Sobolev space of order $s$ on $\mathbb{R}$,  with the norm $\Vert u\Vert_{H^{s}}=\big(\int_\mathbb{R} (1+\xi^2)^s \vert \widehat u(\xi)\vert^2 d\xi \big)^{1/2}$.  All the integrals in the paper will be over $\mathbb{R}$, so we will omit the limits of integration. $C$ is a generic positive constant.  We denote partial differentiations  by  $D_{x}$ etc.

\section{An energy estimate}\label{sec2}

The current section is devoted to the derivation of an energy estimate. Throughout this work we will assume that the kernel $ \alpha $ is an even  function in $L^{1}(\mathbb{R})$  with $\int \alpha (x)dx=1$, or more generally a finite Borel measure on $\mathbb{R}$ with $\int d\alpha =1$. For convenience, we will use the notation $Ku=\alpha \ast u$.  We note that the assumptions on $\alpha $ being an even $L^{1}(\mathbb{R})$ function, or more generally a finite Borel measure, imply that $K$ is a bounded and self-adjoint operator on $H^{s}$ for any $s$.

As $\ K$ also commutes with the derivative operator, for all $f\in H^{s}$, $s\geq 1$ we have
\begin{equation}
                \int (Kf)f_{x}dx=\int (Kf_{x})fdx=0.  \label{(Kf)f_x}
\end{equation}%
This identity follows from the self-adjointness of $K$ on $L^{2}(\mathbb{R})$ and integration by parts:
\begin{displaymath}
                \int (Kf)f_{x}dx=\int f(Kf_{x})dx=\int f(Kf)_{x}dx=-\int f_{x}(Kf)dx.
\end{displaymath}

In the rest of the work, we will use the notations $\Lambda^{s}=(1-D_{x}^{2})^{s/2}$ and $[\Lambda^{s},f] g=\Lambda^{s}(f g)-f\Lambda^{s}g$. Furthermore, to prove our estimates below and in the next section, we will need the following commutator estimates \cite{Kato1988}:
\begin{lemma}\label{lem2.1}
    Let  $s>0$. Then for all $f, g$ satisfying $f\in H^{s}$, $D_{x}f\in L^{\infty}$, $g\in H^{s-1}\cap L^{\infty}$,
    \begin{equation*}
        \big\Vert [ \Lambda^{s},f] g\big\Vert_{L^{2}}
        \leq C \big(\Vert D_{x}f\Vert_{L^{\infty}}\Vert g\Vert_{H^{s-1}}+\Vert f\Vert _{H^{s}}\Vert g\Vert_{L^{\infty }}\big).
    \end{equation*}%
    In particular, when $s>3/2$, due to the Sobolev embeddings $H^{s-1}\subset L^{\infty}$, for all $f, g\in H^{s}$
    \begin{equation*}
        \big\Vert [ \Lambda^{s},f] D_{x}g\big\Vert_{L^{2}}\leq C \Vert f\Vert_{H^{s}}\Vert g\Vert_{H^{s}}.
    \end{equation*}%
\end{lemma}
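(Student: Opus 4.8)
The plan is as follows. Since the two displayed inequalities are exactly the commutator estimates of Kato and Ponce, the most direct course is to cite \cite{Kato1988}; what follows is the argument I would reproduce if a self-contained proof were wanted. The natural setting is Fourier space: writing $\langle\xi\rangle=(1+\xi^{2})^{1/2}$, so that $\Lambda^{s}$ acts as multiplication by $\langle\xi\rangle^{s}$ on the Fourier side, one has
\begin{equation*}
    \widehat{[\Lambda^{s},f]g}(\xi)=\frac{1}{2\pi}\int\big(\langle\xi\rangle^{s}-\langle\xi-\eta\rangle^{s}\big)\,\widehat{f}(\eta)\,\widehat{g}(\xi-\eta)\,d\eta ,
\end{equation*}
and the whole point is to exploit the cancellation in the symbol difference $\langle\xi\rangle^{s}-\langle\xi-\eta\rangle^{s}$, which vanishes at $\eta=0$. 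I would run a Littlewood--Paley (Bony paraproduct) decomposition and split $[\Lambda^{s},f]g$ into a low-frequency-$f$/high-frequency-$g$ part, a high-frequency-$f$/low-frequency-$g$ part, and a comparable-frequencies part, estimating each in $L^{2}$.

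In the first part the frequency of $f$ is much smaller than that of $g$, so $\langle\xi\rangle\sim\langle\xi-\eta\rangle$ and a first-order Taylor expansion of $\lambda\mapsto\langle\lambda\rangle^{s}$ gives $|\langle\xi\rangle^{s}-\langle\xi-\eta\rangle^{s}|\lesssim|\eta|\,\langle\xi-\eta\rangle^{s-1}$; the extra factor $|\eta|$ is absorbed into $\widehat{D_{x}f}(\eta)$, whose low-frequency part is controlled in $L^{\infty}$ by $\Vert D_{x}f\Vert_{L^{\infty}}$, while $\langle\xi-\eta\rangle^{s-1}\widehat{g}(\xi-\eta)$ reconstitutes $\Vert g\Vert_{H^{s-1}}$, and the almost-orthogonality of the dyadic blocks produces the contribution $\Vert D_{x}f\Vert_{L^{\infty}}\Vert g\Vert_{H^{s-1}}$. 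In the remaining two parts the frequency of $g$ is at most comparable to that of $f$, so it suffices to use the crude bound $|\langle\xi\rangle^{s}-\langle\xi-\eta\rangle^{s}|\lesssim\langle\eta\rangle^{s}$; then $\langle\eta\rangle^{s}\widehat{f}(\eta)$ yields $\Vert f\Vert_{H^{s}}$, the low- or comparable-frequency part of $g$ (together with $\Lambda^{s}$ acting on it, handled by Bernstein's inequality) is estimated in $L^{\infty}$ by $\Vert g\Vert_{L^{\infty}}$, and summing the dyadic blocks --- the place where the hypothesis $s>0$ enters, through a further dyadic decomposition in the comparable-frequency term --- produces $\Vert f\Vert_{H^{s}}\Vert g\Vert_{L^{\infty}}$. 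Adding the three contributions gives the first inequality.

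The ``in particular'' statement then follows by applying the first inequality with $g$ replaced by $D_{x}g$,
\begin{equation*}
    \big\Vert[\Lambda^{s},f]D_{x}g\big\Vert_{L^{2}}\leq C\big(\Vert D_{x}f\Vert_{L^{\infty}}\Vert D_{x}g\Vert_{H^{s-1}}+\Vert f\Vert_{H^{s}}\Vert D_{x}g\Vert_{L^{\infty}}\big),
\end{equation*}
and then invoking the Sobolev embedding $H^{s-1}\subset L^{\infty}$, valid for $s>3/2$, to bound $\Vert D_{x}f\Vert_{L^{\infty}}\leq C\Vert D_{x}f\Vert_{H^{s-1}}\leq C\Vert f\Vert_{H^{s}}$ and $\Vert D_{x}g\Vert_{L^{\infty}}\leq C\Vert g\Vert_{H^{s}}$, together with the trivial $\Vert D_{x}g\Vert_{H^{s-1}}\leq\Vert g\Vert_{H^{s}}$. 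I expect the main obstacle to be the low-frequency-$f$/high-frequency-$g$ interaction: one must extract exactly one derivative of $f$ from the Taylor remainder of the symbol $\langle\cdot\rangle^{s}$ on a dyadic annulus --- so that only $\Vert D_{x}f\Vert_{L^{\infty}}$, and not the full $\Vert f\Vert_{H^{s}}$, appears --- while carefully carrying the $\langle\xi\rangle^{s-1}$ weight onto $g$; this is precisely the delicate point that makes the Kato--Ponce estimate nontrivial, whereas the high-low and comparable-frequency terms and the concluding Sobolev embeddings are routine.
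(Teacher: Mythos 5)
Your proposal is correct and takes essentially the same route as the paper: the paper offers no proof of this lemma, simply citing the Kato--Ponce commutator estimates \cite{Kato1988}, which is exactly your primary course, and your deduction of the second inequality from the first (replace $g$ by $D_{x}g$ and use $H^{s-1}\subset L^{\infty}$ for $s>3/2$) is the intended argument. Your additional Littlewood--Paley sketch is a sound outline of the standard proof of the first estimate, but it goes beyond what the paper itself records.
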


We will consider the related  linear equation
\begin{equation}
            u_{t}+K\big( (1+w)u_{x}\big) =F+G(u),\text{\ }x\in \mathbb{R},   \text{ \ \ }t>0, \label{lin}
\end{equation}
with the given functions $w(x,t)$, $F(x,t)$ and the linear map $G(u)$. Clearly, the original equation (\ref{cont})   is to be obtained from    (\ref{lin}) \ by setting $w=(p+1)u^{p}$ and $F=G=0$; hence (\ref{lin}) is a linearization of (\ref{cont}).

We define the $H^{s}$-energy functional by
\begin{equation}
        \mathcal{E}_{s}^{2}(t)=\frac{1}{2}\int \big(1+w(x,t)\big)\big(\Lambda^{s}u(x,t)\big)^{2}dx.  \label{energy}
\end{equation}%
Note that when $w$ is assumed to satisfy
\begin{equation}
        0<c_{1}\leq 1+w(x,t)\leq c_{2},  \label{hyperbolic}
\end{equation}%
where $c_{1}$ and $c_{2}$ are constants, $\mathcal{E}_{s}^{2}(t)$ is equivalent to the norm $\Vert u(t)\Vert _{H^{s}}^{2}$.
\begin{lemma}\label{lem2.2}
     Let $s>3/2$, $T>0$ and $F\in C\big([0,T],H^{s}\big)$, $w\in C\big([ 0,T],H^{s}\big)\cap C^{1}\big([0,T],H^{s-1}\big)$ with $0<c_{1}\leq 1+w(x,t)\leq c_{2}~$ for all $(x,t)\in \mathbb{R}\times \lbrack 0,T]$. Let $G$ be a linear map on $ C\big([0,T],H^{s}\big)$, satisfying $\Vert G(u(t))\Vert _{H^{s}}\leq C_{G}\Vert u(t)\Vert _{H^{s}}$ for all $t\in [0, T]$. Suppose $u\in C\big([0,T],H^{s}\big)$ satisfies (\ref{lin}) on $\mathbb{R}\times [0,T]$ with $u(x,0)=u_{0}(x)$. Then we have the estimate
    \begin{equation}
        \Vert u(t)\Vert _{H^{s}}\leq \Vert u_{0}\Vert _{H^{s}}\thinspace e^{At}+\frac{B}{A}(e^{At}-1)  \label{energy21}
    \end{equation}%
    for $0\leq t\leq T$, where
    \begin{eqnarray}
    \!\!\!\!\!\!\!\!\!
        A &=&\sup_{0\leq t\leq T}C\Big( \Vert w_{t}(t)\Vert _{L^{\infty }}+\big(1+\Vert w(t)\Vert _{L^{\infty}}\big) \big(C_{G}
            +\Vert \alpha \Vert_{L^{1}}\Vert w(t)\Vert_{H^{s}}\big)\Big),   \label{energy21A} \\
    \!\!\!\!\!\!\!\!\!
        B &=&\sup_{0\leq t\leq T}C\big(1+\Vert w(t)\Vert _{L^{\infty}}\big)\Vert F(t)\Vert _{H^{s}}.  \label{energy21B}
    \end{eqnarray}
\end{lemma}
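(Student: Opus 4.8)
The plan is to run the classical Kato-type weighted energy argument on the linearization (\ref{lin}), using that the functional $\mathcal{E}_s$ is equivalent to $\Vert\cdot\Vert_{H^s}$ by (\ref{hyperbolic}). Put $U=\Lambda^s u$. Since the Fourier multiplier $K$ commutes with both $\Lambda^s$ and $D_x$, applying $\Lambda^s$ to (\ref{lin}) and peeling off a commutator gives
\begin{equation*}
 U_t + K\big((1+w)\,U_x\big) = \Lambda^s F + \Lambda^s G(u) - K\big([\Lambda^s,w]\,u_x\big).
\end{equation*}
From (\ref{lin}) one reads off that $u_t\in C\big([0,T],H^{s-1}\big)$, so $\mathcal{E}_s^2$ is differentiable in $t$ (this is made rigorous by the usual Friedrichs mollification, whose commutator errors vanish in the limit), and
\begin{equation*}
 \frac{d}{dt}\mathcal{E}_s^2(t) = \tfrac12\langle w_t\,U,\,U\rangle + \langle (1+w)\,U,\,U_t\rangle .
\end{equation*}
Substituting $U_t$ from the commuted equation, one is left to bound $\tfrac12\langle w_tU,U\rangle$ together with the four terms coming from its right-hand side.

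The only term threatening a loss of derivative is $-\langle(1+w)U,\,K((1+w)U_x)\rangle$. I would move $K$ onto the other factor by self-adjointness, write $(1+w)U_x=\big((1+w)U\big)_x-w_xU$, and apply the orthogonality identity (\ref{(Kf)f_x}) with $f=(1+w)U$ to annihilate the genuinely top-order piece $\langle K((1+w)U),\,((1+w)U)_x\rangle=0$. What remains, $\langle K((1+w)U),\,w_xU\rangle$, is estimated by $\|\alpha\|_{L^1}\big(1+\|w\|_{L^\infty}\big)\|w_x\|_{L^\infty}\|u\|_{H^s}^2$ using $\|K\|_{L^2\to L^2}\le\|\alpha\|_{L^1}$, and then $\|w_x\|_{L^\infty}\le C\|w\|_{H^s}$ since $s>3/2$. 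The commutator term $-\langle(1+w)U,\,K([\Lambda^s,w]u_x)\rangle$ is controlled by the Cauchy--Schwarz inequality together with the sharp commutator estimate of Lemma \ref{lem2.1} (its $s>3/2$ form), giving $C\|\alpha\|_{L^1}\big(1+\|w\|_{L^\infty}\big)\|w\|_{H^s}\|u\|_{H^s}^2$. The three remaining terms are elementary: $\langle(1+w)U,\Lambda^sG(u)\rangle\le\big(1+\|w\|_{L^\infty}\big)C_G\|u\|_{H^s}^2$ by the hypothesis on $G$, $\langle(1+w)U,\Lambda^sF\rangle\le\big(1+\|w\|_{L^\infty}\big)\|F\|_{H^s}\|u\|_{H^s}$, and $\tfrac12\langle w_tU,U\rangle\le\tfrac12\|w_t\|_{L^\infty}\|u\|_{H^s}^2$.

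Collecting these bounds and using (\ref{hyperbolic}) to pass freely between $\|u(t)\|_{H^s}$ and $\mathcal{E}_s(t)$ yields a differential inequality of the form $\frac{d}{dt}\mathcal{E}_s^2\le 2A\,\mathcal{E}_s^2+2B\,\mathcal{E}_s$ with $A,B$ as in (\ref{energy21A})--(\ref{energy21B}). Dividing by $\mathcal{E}_s$---after the standard regularization replacing $\mathcal{E}_s$ by $\sqrt{\mathcal{E}_s^2+\varepsilon^2}$ and letting $\varepsilon\to0$ to avoid dividing by zero---gives $\frac{d}{dt}\mathcal{E}_s\le A\mathcal{E}_s+B$, and Gronwall's lemma followed by one more use of the norm equivalence delivers (\ref{energy21}). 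The main obstacle is precisely this top-order term: obtaining it at no cost in derivatives forces one to exploit all three structural properties of $K$ simultaneously---self-adjointness, commutation with $D_x$, and the resulting orthogonality (\ref{(Kf)f_x})---after which everything reduces to bookkeeping with Lemma \ref{lem2.1} and the Sobolev embedding $H^{s-1}\subset L^\infty$.
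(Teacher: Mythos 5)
Your argument is correct and is essentially the paper's own proof: both differentiate the weighted energy $\mathcal{E}_s^2$, reduce the dangerous term to $\langle K((1+w)\Lambda^s u),\,((1+w)\Lambda^s u)_x\rangle$ via self-adjointness of $K$, its commutation with $\Lambda^s$ and $D_x$, and the decomposition $(1+w)\Lambda^s u_x=((1+w)\Lambda^s u)_x-w_x\Lambda^s u$, kill it with (\ref{(Kf)f_x}), control the leftover and the $[\Lambda^s,w]u_x$ piece by Lemma \ref{lem2.1} and $\Vert K\Vert_{L^2\to L^2}\le\Vert\alpha\Vert_{L^1}$, and conclude by Gronwall. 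Commuting $\Lambda^s$ through the equation first rather than inside the inner product, and the mollification/$\varepsilon$-regularization remarks, are only presentational differences.
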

\begin{proof}
    Differentiating (\ref{energy}) and using (\ref{lin}) we get
    \begin{eqnarray}
    \frac{d}{dt}\mathcal{E}_{s}^{2}(t)
        &=&\int \Big( \frac{1}{2}w_{t}(\Lambda ^{s}u)^{2}+(1+w)(\Lambda ^{s}u_{t})(\Lambda ^{s}u)\Big) dx  \nonumber \\
        &=&\int \Big( \frac{1}{2}w_{t}(\Lambda ^{s}u)^{2}+(1+w)(\Lambda^{s}F)(\Lambda ^{s}u)+(1+w)(\Lambda ^{s}G(u))(\Lambda ^{s}u) \nonumber \\
        &&~~~~~         -(1+w)(\Lambda^{s} K((1+w)u_{x})) (\Lambda ^{s}u)\Big) dx.   \label{epsqu}
    \end{eqnarray}%
    We handle the last term on the right-hand-side  separately as follows:
    \begin{eqnarray*}
        I(t)
        &=&\big\langle (1+w)\Lambda ^{s}K\big((1+w)u_{x}\big),\Lambda ^{s}u\big\rangle \\
        & =&\big\langle K\Lambda ^{s}\big((1+w)u_{x}\big),(1+w)\Lambda ^{s}u\big\rangle  \\
        &=&\big\langle \Lambda^{s}\big((1+w)u_{x}\big),K\big((1+w)\Lambda ^{s}u\big)\big\rangle  \\
        &=&\big\langle (1+w)\Lambda ^{s}u_{x}, K\big((1+w)\Lambda ^{s}u\big)\big\rangle
                +\big\langle \lbrack \Lambda ^{s}, 1+w \rbrack u_{x},K\big((1+w)\Lambda ^{s}u\big) \big\rangle\\
        &=&\big\langle ((1+w)\Lambda ^{s}u)_{x},K\big((1+w)\Lambda ^{s}u\big)\big\rangle
                -\big\langle w_{x}\Lambda ^{s}u,K\big((1+w)\Lambda ^{s}u\big)\big\rangle \\
         &&         +\big\langle \lbrack \Lambda^{s}, 1+w\rbrack u_{x},K\big((1+w)\Lambda ^{s}u\big)\big\rangle \\
        &=&-\big\langle w_{x}\Lambda ^{s}u,K\big((1+w)\Lambda ^{s}u\big)\big\rangle
                    +\big\langle \lbrack\Lambda ^{s},w\rbrack u_{x},K\big((1+w)\Lambda ^{s}u\big)\big\rangle,
    \end{eqnarray*}%
    where we have used   $[\Lambda^{s},1+w]u_{x}=[\Lambda ^{s},w]u_{x}+[\Lambda ^{s},1]u_{x} =[\Lambda ^{s},w]u_{x}$   and  (\ref{(Kf)f_x}) with $f=(1+w)\Lambda ^{s}u$. As $\Vert Kv\Vert _{L^{2}}\leq \Vert \alpha \Vert_{L^{1}}\Vert v\Vert _{L^{2}}$ and  $\Vert \Lambda ^{s}u\Vert _{L^{2}}=\Vert u\Vert_{H^{s}}$,
    \begin{eqnarray*}
            \vert I(t)\vert
            &\leq &\big\Vert w_{x}\Lambda ^{s}u\big\Vert_{L^{2}}\big\Vert K((1+w)\Lambda ^{s}u)\big\Vert _{L^{2}}
                    +\big\Vert[\Lambda ^{s},w]u_{x}\big\Vert _{L^{2}}\big\Vert K\big((1+w)\Lambda^{s}u\big)\big\Vert _{L^{2}} \\
            &\leq &\Vert \alpha \Vert _{L^{1}}\Big(\big \Vert w_{x}\Lambda^{s}u\big\Vert _{L^{2}
                        }\big\Vert (1+w)\Lambda ^{s}u\big\Vert_{L^{2}}
                        +\big\Vert [\Lambda ^{s},w]u_{x}\big\Vert _{L^{2}}\big\Vert(1+w)\Lambda ^{s}u\big\Vert _{L^{2}}\Big)  \\
            &\leq &\Vert \alpha \Vert _{L^{1}}\big( 1+\Vert w\Vert _{L^{\infty }}\big)
                      \Big( \Vert w_{x}\Vert_{L^{\infty }}\Vert \Lambda ^{s}u\Vert _{L^{2}}^{2}
                        +\big\Vert[\Lambda ^{s},w]u_{x}\big\Vert _{L^{2}}\Vert \Lambda ^{s}u\Vert_{L^{2}}\Big)  \\
            &\leq &\Vert \alpha \Vert _{L^{1}}\big( 1+\Vert w\Vert _{L^{\infty }}\big)
                      \Big( \Vert w_{x}\Vert_{L^{\infty }}\Vert u\Vert _{H^{s}}^{2}
                        +\big\Vert[\Lambda ^{s},w]u_{x}\big\Vert _{L^{2}}\Vert u\Vert_{H^{s}}\Big) .
    \end{eqnarray*}%
    Finally, since $s>3/2$, by the commutator estimate in Lemma \ref{lem2.1} we have
    \begin{displaymath}
            \big\Vert\lbrack \Lambda ^{s},w]u_{x}\big\Vert_{L^{2}}\leq C\Vert w\Vert_{H^{s}}\Vert u\Vert _{H^{s}}.
    \end{displaymath}
    So we get
    \begin{displaymath}
       \vert I(t)\vert \leq \Vert \alpha \Vert _{L^{1}}\big(1+\Vert w\Vert _{L^{\infty }}\big)
                    \big(  \Vert w_{x}\Vert _{L^{\infty }}+\Vert w\Vert _{H^{s}}\big)\Vert u\Vert _{H^{s}}^{2}.
    \end{displaymath}
        Then, from (\ref{epsqu}) we have
    \begin{eqnarray*}
        \frac{d}{dt}\mathcal{E}_{s}^{2}(t)
                &=&\int\Big( \frac{1}{2}w_{t}(\Lambda^{s}u)^{2}+(1+w)(\Lambda ^{s}F)(\Lambda ^{s}u)
                            +(1+w)(\Lambda ^{s}G(u))(\Lambda^{s}u)\Big) dx-I(t) \\
                &\leq &\frac{1}{2}\Vert w_{t}\Vert _{L^{\infty }}\Vert u\Vert _{H^{s}}^{2}
                            +(1+\Vert w\Vert _{L^{\infty}})\Big(\Vert F\Vert _{H^{s}}\Vert u\Vert_{H^{s}}
                            +\Vert G(u)\Vert_{H^{s}}\Vert u\Vert _{H^{s}} \Big) \\
               &&             +\Vert \alpha \Vert_{L^{1}}( 1+\Vert w\Vert _{L^{\infty }})
                            \big(\Vert w_{x}\Vert _{L^{\infty }}+\Vert w\Vert _{H^{s}}\big)\Vert u\Vert _{H^{s}}^{2}.
    \end{eqnarray*}%
     Since $\Vert G(u(t))\Vert _{H^{s}}\leq C_{G}\Vert u(t)\Vert _{H^{s}}$, we have
    \begin{eqnarray}
     \frac{d}{dt}\mathcal{E}_{s}^{2}(t)
            &\leq&  \Big(\frac{1}{2}\Vert w_{t}\Vert_{L^{\infty }} +( 1+\Vert w\Vert _{L^{\infty }})\Big(C_{G}
                +\Vert \alpha \Vert_{L^{1}}\big(\Vert w_{x}\Vert _{L^{\infty }}  +\Vert w\Vert _{H^{s}}\big)\Big)\Big) \Vert u\Vert_{H^{s}}^{2}  \nonumber  \\
           &&+ ( 1+\Vert w\Vert _{L^{\infty }})\Vert F\Vert _{H^{s}} \Vert u\Vert_{H^{s}} .
   \end{eqnarray}
   But  $\Vert u(t)\Vert _{H^{s}}^{2}\leq \frac{2}{c_{1}}\mathcal{E}_{s}^{2}(t)$ due to (\ref{energy})-(\ref{hyperbolic}), so
      \begin{eqnarray}
       \frac{d}{dt}\mathcal{E}_{s}^{2}(t)
            &\leq&  C\Big(\Vert w_{t}\Vert_{L^{\infty }} +( 1+\Vert w\Vert _{L^{\infty }})\Big(C_{G}
                +\Vert \alpha \Vert_{L^{1}}\big(\Vert w_{x}\Vert _{L^{\infty }}  +\Vert w\Vert _{H^{s}}\big)\Big)\Big)\mathcal{E}_{s}^{2}(t)  \nonumber \\
           &&    +C( 1+\Vert w\Vert _{L^{\infty }})      \Vert F\Vert_{H^{s}}\mathcal{E}_{s}(t), \nonumber \\
           \frac{d}{dt}\mathcal{E}_{s}(t)
                &\leq &C\Big(\Vert w_{t}\Vert_{L^{\infty }}+( 1+\Vert w\Vert _{L^{\infty }})\Big(C_{G}
                    +\Vert \alpha \Vert_{L^{1}}\big(\Vert w_{x}\Vert _{L^{\infty }} +\Vert w\Vert _{H^{s}}\big)\Big)\Big)\mathcal{E}_{s}(t) \nonumber  \\
               &&    +C( 1  +\Vert w\Vert _{L^{\infty }}) \Vert F\Vert _{H^{s}}, \nonumber \\
                &\leq &A\mathcal{E}_{s}(t)+B, \label{grr}
    \end{eqnarray}%
    where
    \begin{eqnarray*}
            A &=&\sup_{0\leq t\leq T}C\Big( \Vert w_{t}(t)\Vert _{L^{\infty }}+(1+\Vert w(t)\Vert _{L^{\infty}}) \big(C_{G}
            +\Vert \alpha \Vert_{L^{1}}\Vert w(t)\Vert_{H^{s}}\big)\Big) , \\
            B &=&\sup_{0\leq t\leq T}C( 1+\Vert w(t)\Vert _{L^{\infty}}) \Vert F(t)\Vert _{H^{s}}.
    \end{eqnarray*}%
   We note that when $\alpha$ is a measure, the quantity $\Vert \alpha \Vert_{L^{1}}$ should be replaced by $\vert \alpha \vert (\mathbb{R})$.  An application of Gronwall's inequality to (\ref{grr}) then yields
    \begin{displaymath}
            \mathcal{E}_{s}(t)\leq \mathcal{E}_{s}(0)e^{At}+\frac{B}{A}(e^{At}-1).
    \end{displaymath}
    Since $\mathcal{E}_{s}(t)\approx \Vert u(t)\Vert _{H^{s}},$ \ this completes the proof of (\ref{energy21}).
\end{proof}

The local well-posedness of the Cauchy problem for (\ref{cont}) follows from the standard hyperbolic approach \cite{Taylor2011}, where the main tool is the energy estimate of Lemma \ref{lem2.2}. To be precise we have:
\begin{theorem}\label{theo2.3}
   Suppose $s>3/2$,  $u_{0}\in H^{s}$ with sufficiently small $\Vert u_{0} \Vert _{H^{s}}$. Then there exists $T>0$; so that   the Cauchy problem for (\ref{cont}) with initial data $u_{0}$ has a unique solution $u\in C\big([0, T], H^{s}\big)\cap C^{1}\big([0, T], H^{s-1}\big)$. Moreover, the existence time $T$ is of the form $T=T(\Vert u_{0} \Vert _{H^{s}}, \Vert \alpha \Vert_{L^{1}})$; in other words it depends only on $\Vert u_{0} \Vert _{H^{s}}$ and $\Vert \alpha \Vert_{L^{1}}$.
\end{theorem}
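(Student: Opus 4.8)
The plan is to follow the classical Kato-type scheme for quasilinear hyperbolic equations (see \cite{Taylor2011}), using Lemma \ref{lem2.2} as the only source of a priori control. Writing $w=(p+1)u^{p}$, equation (\ref{cont}) becomes $u_{t}+K\big((1+w)u_{x}\big)=0$, which is the case $F=G=0$ of (\ref{lin}) with $w$ now a function of $u$. Since $s>3/2$, both $H^{s}$ and $H^{s-1}$ are Banach algebras, which will be used repeatedly.

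\emph{Step 1: a priori estimates on an invariant small ball.} I would fix $R>0$ so small that $\Vert u\Vert_{H^{s}}\le R$ forces $\sup_{x}\vert (p+1)u^{p}\vert\le 1/2$, so that the hyperbolicity condition (\ref{hyperbolic}) holds with $c_{1}=1/2$, $c_{2}=3/2$. For $\Vert u\Vert_{H^{s}}\le R$ the algebra property gives $\Vert w\Vert_{H^{s}}\le C(R)\Vert u\Vert_{H^{s}}$ and $\Vert w\Vert_{L^{\infty}}\le C(R)$, while from the equation $u_{t}=-K\big((1+w)u_{x}\big)$ one gets $\Vert u_{t}\Vert_{H^{s-1}}\le\Vert\alpha\Vert_{L^{1}}\Vert (1+w)u_{x}\Vert_{H^{s-1}}\le C(R,\Vert\alpha\Vert_{L^{1}})\Vert u\Vert_{H^{s}}$ and hence $\Vert w_{t}\Vert_{L^{\infty}}\le\Vert w_{t}\Vert_{H^{s-1}}\le C(R)\Vert u\Vert_{H^{s}}$. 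Feeding these bounds into (\ref{energy21A})--(\ref{energy21B}) shows that the constants in Lemma \ref{lem2.2} satisfy $A\le A_{0}(R,\Vert\alpha\Vert_{L^{1}})$ and $B=0$ as long as $\Vert u(t)\Vert_{H^{s}}\le R$, so Lemma \ref{lem2.2} yields $\Vert u(t)\Vert_{H^{s}}\le\Vert u_{0}\Vert_{H^{s}}e^{A_{0}t}$. If $\Vert u_{0}\Vert_{H^{s}}\le R/2$ and $T$ is chosen so that $e^{A_{0}T}\le 2$, then the bound $\Vert u(t)\Vert_{H^{s}}\le R$ propagates on $[0,T]$ and $T=T(\Vert u_{0}\Vert_{H^{s}},\Vert\alpha\Vert_{L^{1}})$, as claimed.

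\emph{Step 2: construction of a solution.} For $\varepsilon>0$ let $J_{\varepsilon}$ be a Friedrichs mollifier and consider the regularized problem
\[ u^{\varepsilon}_{t}+J_{\varepsilon}K\big((1+(p+1)(u^{\varepsilon})^{p})(J_{\varepsilon}u^{\varepsilon})_{x}\big)=0,\qquad u^{\varepsilon}(0)=u_{0}. \]
Its right-hand side is locally Lipschitz from $H^{s}$ into $H^{s}$ (each $J_{\varepsilon}$ compensates the lost derivative), so Picard--Lindelöf gives $u^{\varepsilon}\in C^{1}([0,T_{\varepsilon}],H^{s})$. Because $J_{\varepsilon}$ is self-adjoint on $L^{2}$ and commutes with $D_{x}$, the computation in the proof of Lemma \ref{lem2.2} goes through verbatim (with $\Lambda^{s}u^{\varepsilon}$ replaced by $J_{\varepsilon}\Lambda^{s}u^{\varepsilon}$ in the critical pairing, using (\ref{(Kf)f_x})), producing the same differential inequality with $\varepsilon$-independent constants; hence Step 1 gives a common existence time $T$ and a uniform bound $\sup_{[0,T]}\Vert u^{\varepsilon}(t)\Vert_{H^{s}}\le R$, which also forces $\{u^{\varepsilon}_{t}\}$ to be bounded in $C([0,T],H^{s-1})$. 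By an Aubin--Lions/Arzelà--Ascoli argument a subsequence converges in $C([0,T],H^{s'}_{\mathrm{loc}})$ for every $s'<s$, which suffices to pass to the limit in each term and identify the limit $u$, bounded and weakly continuous with values in $H^{s}$, as a solution. (Alternatively one may run the iteration $u^{(n+1)}_{t}+K\big((1+(p+1)(u^{(n)})^{p})(u^{(n+1)})_{x}\big)=0$, $u^{(n+1)}(0)=u_{0}$, bounding $u^{(n)}$ in $C([0,T],H^{s})$ by Step 1 applied to the linear equation and showing the sequence is Cauchy in $C([0,T],L^{2})$ via the $L^{2}$ energy estimate for the difference.)

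\emph{Step 3: uniqueness and time regularity.} If $u,v$ are two solutions with the same data, $d=u-v$ solves
\[ d_{t}+K\big((1+(p+1)u^{p})d_{x}\big)=-K\big((p+1)(u^{p}-v^{p})v_{x}\big), \]
which is of the form (\ref{lin}) with $w=(p+1)u^{p}$ and with a right-hand side that is linear in $d$ and bounded in $L^{2}$ by $C\Vert d\Vert_{L^{2}}$ (using $\vert u^{p}-v^{p}\vert\le C(\Vert u\Vert_{L^{\infty}}^{p-1}+\Vert v\Vert_{L^{\infty}}^{p-1})\vert d\vert$ and $v_{x}\in L^{\infty}$); applying the $L^{2}$ version of the energy computation in Lemma \ref{lem2.2} (where no commutator estimate is needed) and Gronwall gives $d\equiv 0$. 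Finally, weak continuity of $u$ in $H^{s}$ together with continuity of $t\mapsto\mathcal{E}_{s}(t)$ (its derivative is controlled by the bounds above) and the equivalence $\mathcal{E}_{s}(t)\approx\Vert u(t)\Vert_{H^{s}}$ upgrade weak to strong continuity, so $u\in C([0,T],H^{s})$, and then $u_{t}=-K\big((1+w)u_{x}\big)\in C([0,T],H^{s-1})$.

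The main obstacle is the loss of one derivative: the map $u\mapsto -K\big((1+(p+1)u^{p})u_{x}\big)$ does not send $H^{s}$ to $H^{s}$, so neither a direct fixed-point argument nor the naive iteration closes without the regularization of Step 2. The whole argument rests on Lemma \ref{lem2.2} supplying an a priori estimate with \emph{no} derivative loss --- the commutator term $[\Lambda^{s},w]u_{x}$, estimated via Lemma \ref{lem2.1}, is precisely what makes the top-order contribution cancel --- and on the smallness of $\Vert u_{0}\Vert_{H^{s}}$, which keeps $1+w$ bounded away from zero so that the hypothesis (\ref{hyperbolic}) of Lemma \ref{lem2.2} holds on the whole existence interval; this is also what makes $T$ depend only on $\Vert u_{0}\Vert_{H^{s}}$ and $\Vert\alpha\Vert_{L^{1}}$.
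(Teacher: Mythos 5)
Your proposal is correct and follows essentially the same route as the paper, which only sketches the argument: the standard hyperbolic (Kato/Taylor) scheme with Lemma \ref{lem2.2} as the sole a priori tool, smallness of $\Vert u_{0}\Vert_{H^{s}}$ preserving the condition (\ref{hyperbolic}), and the existence time read off from $T=\mathcal{O}(A^{-1})$ so that it depends only on $\Vert u_{0}\Vert_{H^{s}}$ and $\Vert\alpha\Vert_{L^{1}}$. You simply supply the standard details (mollified approximation or iteration, $L^{2}$ uniqueness estimate, upgrade from weak to strong continuity) that the paper delegates to \cite{Taylor2011}.
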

The idea of the proof is as follows.  Roughly speaking, the nonlinear problem  (\ref{cont}) can be considered as the linearized problem (\ref{lin}) with $w=(p+1)u^{p}$ and $F=G=0$. The condition $0<c_{1}\leq 1+(p+1)u^{p}\leq c_{2}$ is then achieved at $t=0$ by the smallness assumption of $\Vert u_{0} \Vert _{H^{s}}$ and carried on by continuity for small times. From Lemma \ref{lem2.2} we see that this can be achieved for small $At$; in other words the existence time should satisfy $T=\mathcal{ O}(A^{-1})$. Again when $t=0$, $A$ depends on $\Vert u_{0} \Vert _{H^{s}}$ and $\Vert \alpha \Vert_{L^{1}}$; by continuity this can be carried over small times using the energy estimate, yielding  $T=T(\Vert u_{0} \Vert _{H^{s}}, \Vert \alpha \Vert_{L^{1}})$.
\begin{remark}\label{rem2.4}
    Consider the parameter dependent family of  (\ref{para}). Since  $\Vert \alpha_{\delta} \Vert_{L^{1}}=\Vert \alpha \Vert_{L^{1}}$, the solutions $u^{\delta}$   with the same initial data $u_{0}$ will have a uniform existence time $T$ independent of  $\delta$. Moreover, again by Lemma \ref{lem2.2},  $\Vert u^{\delta}(t) \Vert _{H^{s}}$ will be uniformly bounded in $\delta$ and $t\in [0, T]$.
\end{remark}
\begin{remark}\label{rem2.5}
If the above theorem is applied to  the  equation  $u_{t} +\alpha\ast (u+\epsilon^{p}u^{p+1})_{x}=0$ with small nonlinearity  $\epsilon^{p}u^{p+1}$, Lemma \ref{lem2.2} with $F=G=0$ gives $A=\mathcal{O}(\epsilon^{p})$. In other words, the existence time turns out to be $T^{\epsilon}=\mathcal{O}(\frac{1}{\epsilon^{p}})$.
\end{remark}

\section{Comparison of Solutions}\label{sec3}

In this section, we restrict ourselves to the kernel functions  that have the same dispersive nature in the long-wave limit  and  we  prove our main  theorem, which states that the corresponding solutions with the same initial data  are “close” in a sense which will be made precise below.

Suppose that $\alpha^{(1)}$ and $\alpha ^{(2)}$ are two different kernel  functions  satisfying the following  conditions for some $k\geq 1$:
\begin{itemize}
    \item[(C1)] $\alpha^{(1)}$ and $\alpha ^{(2)}$ have the same first $(2k-1)$-order moments, namely
        \begin{equation}
            \int x^{j}\alpha ^{(1)}(x)dx=\int x^{j}\alpha ^{(2)}(x)dx~~~\mbox{ for }~~0\leq j<2k-1,  \label{moments-real}
        \end{equation}
    \item[(C2)]  $x^{2k}\alpha ^{(i)}(x)\in L^{1}\left( \mathbb{R}\right) $ for $i=1,2$.
\end{itemize}
Notice that if $\alpha =\mu $ is a finite measure, we should replace   the moment integral  by $\int x^{j}d\mu$. We now consider the Cauchy problem for  (\ref{para}) with initial data $u_{0}$. Let $u_{1}^{\delta}$ and $u_{2}^{\delta}$ be solutions of the Cauchy problem, corresponding to the kernels   $\alpha^{(1)}_{\delta}$ and $\alpha ^{(2)}_{\delta}$, respectively.  In the following main result of this paper, we prove that the solutions  $u_{1}^{\delta}$ and $u_{2}^{\delta}$   are "close" to each other.   The idea of the  proof is similar to that  already sketched in Theorem 3.2 of \cite{Erbay2020}.
\begin{theorem}\label{theo3.1}
        Let $\alpha^{(1)}$ and $\alpha^{(2)}$ be two kernels satisfying the conditions (C1) and (C2)  for  some $k\geq 1$. Let  $s>3/2$ and $u_{0}\in H^{s+2k+1}$ with sufficiently small $\Vert u_{0} \Vert _{H^{s+2k+1}}$. Then there are some constants  $C$ and $T>0$  independent of  $\delta $ so that the solutions $u_{i}^{\delta }$ of the Cauchy problems
        \begin{eqnarray}
                && (u_{i})_{t} +\alpha _{\delta }^{\left( i\right) }\ast (u_{i}+u_{i}^{p+1})_{x}=0,~~~~~x\in \mathbb{R}, ~~~~t>0,  \label{main-a} \\
                && u_{i}(x,0) =u_{0} (x),~~~~x\in \mathbb{R},  \label{main-b}
        \end{eqnarray}%
        for $\ i=1,2$ are defined for all $~t\in \left[ 0,T\right] $ and satisfy
        \begin{equation}
                \Vert u_{1}^{\delta}(t) -u_{2}^{\delta}(t) \Vert _{H^{s}}   \leq C\delta^{2k}t\mbox{ \  \  for all \  }t\leq T.   \label{mainresult}
    \end{equation}
\end{theorem}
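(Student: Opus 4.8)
The plan is to derive a linear equation of the form (\ref{lin}) for the difference $r=u_{1}^{\delta}-u_{2}^{\delta}$, to observe that its data vanishes, and to apply the energy estimate of Lemma \ref{lem2.2}; the whole point will be to show that the forcing term in that equation is $\mathcal{O}(\delta^{2k})$ in $H^{s}$. First I would subtract the two equations in (\ref{main-a}). Writing $K_{i}=\alpha^{(i)}_{\delta}\ast$ and factoring $u_{1}^{p+1}-u_{2}^{p+1}=q\,r$ with $q=\sum_{j=0}^{p}u_{1}^{j}u_{2}^{p-j}$, and then adding and subtracting $K_{1}\big((u_{2}+u_{2}^{p+1})_{x}\big)$, one obtains
\begin{equation*}
    r_{t}+K_{1}\big((1+q)r_{x}\big)=F-K_{1}(q_{x}r),\qquad
    F:=-(K_{1}-K_{2})\big(u_{2}^{\delta}+(u_{2}^{\delta})^{p+1}\big)_{x},
\end{equation*}
with $r(x,0)=0$. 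This is exactly (\ref{lin}) with $w=q$ and the linear map $G(r)=-K_{1}(q_{x}r)$.

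Next I would check that the hypotheses of Lemma \ref{lem2.2} hold with $\delta$-independent constants. Since $\Vert u_{0}\Vert_{H^{s+2k+1}}$ is small, Theorem \ref{theo2.3} applied at the regularity level $s+2k+1>3/2$, together with Remark \ref{rem2.4}, produces a common existence time $T$ and bounds for $u_{i}^{\delta}$ in $C\big([0,T],H^{s+2k+1}\big)\cap C^{1}\big([0,T],H^{s+2k}\big)$ that are uniform in $\delta$. In particular $\Vert q\Vert_{L^{\infty}}$ is small enough that $0<c_{1}\leq 1+q\leq c_{2}$, while $\Vert q_{t}\Vert_{L^{\infty}}\leq C$ and, since $H^{s}$ is an algebra, $\Vert G(r)\Vert_{H^{s}}\leq\Vert\alpha^{(1)}\Vert_{L^{1}}\Vert q_{x}r\Vert_{H^{s}}\leq C\Vert q\Vert_{H^{s+1}}\Vert r\Vert_{H^{s}}\leq C_{G}\Vert r\Vert_{H^{s}}$ with $C_{G}$ uniform in $\delta$; hence the constant $A$ in (\ref{energy21A}) is uniform in $\delta$.

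The heart of the matter is the bound $\Vert F(t)\Vert_{H^{s}}\leq C\delta^{2k}$. Put $m(\eta)=\widehat{\alpha^{(1)}}(\eta)-\widehat{\alpha^{(2)}}(\eta)$, so that $K_{1}-K_{2}$ is the Fourier multiplier $\xi\mapsto m(\delta\xi)$, because $\widehat{\alpha^{(i)}_{\delta}}(\xi)=\widehat{\alpha^{(i)}}(\delta\xi)$. Condition (C2) makes $m\in C^{2k}$ with $|m^{(2k)}(\eta)|\leq\Vert x^{2k}\alpha^{(1)}\Vert_{L^{1}}+\Vert x^{2k}\alpha^{(2)}\Vert_{L^{1}}$ for every $\eta$; condition (C1) forces $m^{(j)}(0)=0$ for $0\leq j\leq 2k-2$, and since the kernels are even the order-$(2k-1)$ term vanishes as well, i.e. $m^{(2k-1)}(0)=0$. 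Taylor's formula with integral remainder then gives the \emph{global} bound $|m(\eta)|\leq C|\eta|^{2k}$ on $\mathbb{R}$, whence
\begin{equation*}
    \Vert(K_{1}-K_{2})g\Vert_{H^{s}}^{2}=\int(1+\xi^{2})^{s}|m(\delta\xi)|^{2}|\widehat{g}(\xi)|^{2}\,d\xi\leq C\delta^{4k}\Vert g\Vert_{H^{s+2k}}^{2}.
\end{equation*}
Applying this with $g=\big(u_{2}^{\delta}+(u_{2}^{\delta})^{p+1}\big)_{x}$ and using the uniform $H^{s+2k+1}$-bound on $u_{2}^{\delta}$ (again $H^{s+2k+1}$ is an algebra) yields $\Vert F(t)\Vert_{H^{s}}\leq C\delta^{2k}$, so the constant $B$ of (\ref{energy21B}) satisfies $B\leq C\delta^{2k}$, uniformly in $\delta$.

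Finally I would feed these into (\ref{energy21}). Since $r(\cdot,0)=0$,
\begin{equation*}
    \Vert r(t)\Vert_{H^{s}}\leq\frac{B}{A}\big(e^{At}-1\big)\leq B\,t\,e^{AT}\leq C\delta^{2k}t,\qquad 0\leq t\leq T,
\end{equation*}
with $C$ absorbing $e^{AT}$ and both $A$ and $T$ independent of $\delta$; this is (\ref{mainresult}). The main obstacle is the symbol estimate for $m$: obtaining the full power $|\eta|^{2k}$, rather than $|\eta|^{2k-1}$, genuinely uses the evenness of the kernels to kill the order-$(2k-1)$ term, and the bound on $m$ must be global in $\eta$ so that, after the substitution $\eta=\delta\xi$, it trades a factor $\delta^{2k}$ against $2k$ extra derivatives — which is precisely why the hypothesis is posed at the level $H^{s+2k+1}$. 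A secondary technical point, handled by Theorem \ref{theo2.3} and Remark \ref{rem2.4}, is that the higher-regularity solutions $u_{1}^{\delta},u_{2}^{\delta}$ must be shown to exist on one common interval $[0,T]$ with $\delta$-independent norms; the case where the kernels are finite measures is identical, with moments read as $\int x^{j}\,d\alpha^{(i)}$ and $\Vert\alpha^{(i)}\Vert_{L^{1}}$ as total variation.
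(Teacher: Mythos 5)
Your proposal is correct and follows essentially the same route as the paper: uniform higher-regularity existence via Theorem \ref{theo2.3} and Remark \ref{rem2.4}, a linearized equation of type (\ref{lin}) for the difference $r$ with zero data, a bound $\Vert (K_{\delta}^{(1)}-K_{\delta}^{(2)})g\Vert_{H^{s}}\leq C\delta^{2k}\Vert g\Vert_{H^{s+2k}}$ obtained from the moment conditions on the Fourier side, and then Lemma \ref{lem2.2} with Gronwall. The only differences are cosmetic --- you split the difference equation with $w=q=\sum u_{1}^{j}u_{2}^{p-j}$ and $G(r)=-K_{1}(q_{x}r)$ instead of the paper's $w=(p+1)u_{1}^{p}$ and $G(r)=(p+1)K_{\delta}^{(2)}\big((u_{2}^{p}-u_{1}^{p})(u_{2})_{x}\big)$, and you make explicit (via evenness of the kernels and a global Taylor bound on $m$) the step the paper states more briefly.
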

\begin{proof}
       The proof will be split into several steps.
        \begin{description}
        \item[Step 1] Let  $u_{0} \in H^{s+2k+1}$. By Theorem \ref{theo2.3} and Remark \ref{rem2.4} applied with $s$ replaced by $s+2k+1$, solutions $u^{\delta}_{i}$ exist in $C\big([0, T], H^{s+2k+1}\big)\cap C^{1}\big([0, T], H^{s+2k}\big)$, and  we have the uniform existence time and the uniform bound
            \begin{eqnarray*}
                    \left\Vert u^{\delta}_{i}(t)\right\Vert _{H^{s+2k+1}}      & \leq C \mbox{ \ \ for all\ \ } \delta \text{ \ \ and\ \ } t\leq T,
            \end{eqnarray*}
            for both families of solutions.
        \item[Step 2]  From now on, we will drop the superscript $\delta $ to simplify the presentation. We will use the following form of (\ref{main-a})-(\ref{main-b})
                       \begin{equation}
                    ( u_{i})_{t} +K_{\delta }^{(i)}\big(u_{i}+u_{i}^{p+1}\big)_{x}=0,~~~~ u_{i}(x,0) =u_{0} (x)
            \end{equation}%
            for $\ i=1,2$. Let $r$ denote the difference   between the solutions $u_{1}$ and $u_{2}$, i.e.  $r=u_{1}-u_{2}$. Then $r$ satisfies
            \begin{equation}
                    r_{t} +K_{\delta }^{(1)}\big((1+w)r_{x}\big)=F+G(r),        \quad \quad       r(x,0)=0,  \label{rho-sys}
            \end{equation}%
            where
            \begin{eqnarray}
                    && F =\big(K_{\delta }^{(2)}-K_{\delta }^{(1)}\big)\big((u_{2})_{x}+(p+1)u_{1}^{p}(u_{2})_{x}\big),   \label{rF}\\
                    && G(r) =G(u_{1}-u_{2})=(p+1)K_{\delta}^{(2)}\big((u_{2}^{p}-u_{1}^{p})(u_{2})_{x}\big),   \label{rG}  \\
                    && w=(p+1)u_{1}^{p}.  \label{Rw}
            \end{eqnarray}%
        \item[Step 3]
           Under the conditions (C1) and (C2) described above,  the Fourier transforms of the kernels satisfy $\widehat{\alpha ^{(i)}}\in C^{2k}$ for $i=1,2$ and
            \begin{equation}
                    \frac{d^{j}}{d\xi^{j}}\left( \widehat{\alpha ^{(2)}}(\xi )-\widehat{\alpha^{(1)}}(\xi )\right) \Big|_{\xi =0}=0~~\mbox{ for }~~0\leq j<2k-1. \label{moments-fourier}
            \end{equation}
            Then  $\widehat{\alpha ^{(2)}}(\xi )-\widehat{\alpha ^{(1)}}(\xi )=\mathcal{O}\left( \xi ^{2k}\right)$ near the origin and  we have
            \begin{displaymath}
                    \widehat{\alpha ^{(2)}}(\xi )-\widehat{\alpha ^{(1)}}(\xi )=\xi ^{2k}m(\xi )
            \end{displaymath}%
            for some continuous function $m$. Since both $\widehat{\alpha ^{(1)}}(\xi )$ and $\widehat{\alpha^{(2)}}(\xi )$ are bounded, $m(\xi )$ is also bounded.   With $\widehat{\alpha_{\delta}^{(i)}}(\xi )= \widehat{\alpha^{(i)}}(\delta\xi )$ for $i=1,2$,  the associated  operators $K_{\delta}^{(1)}$  and  $K_{\delta}^{(2)}$    will satisfy
            \begin{displaymath}
                    K_{\delta }^{(2)}=K_{\delta }^{(1)}+\left( -1\right) ^{k}\delta^{2k}D_{x}^{2k}M_{\delta },
            \end{displaymath}%
            where $M_{\delta }$ is the operator with symbol $m(\delta \xi )$. By the boundedness of  $m$, it follows that
            \begin{equation}
                    \left\Vert \big(K_{\delta }^{(2)}-K_{\delta }^{(1)}\big)u\right\Vert_{H^{s}}
                     = \delta^{2k}\left\Vert  D_{x}^{2k}M_{\delta }u\right\Vert _{H^{s}}
                    \leq C\delta^{2k}\left\Vert u\right\Vert _{H^{s+2k}},  \label{K2K1est}
            \end{equation}%
            with the constant $C$ independent of $\delta $.
        \item[Step 4]
            We now estimate the terms  $w$, $F$ and  $G$  in (\ref{rho-sys}).
            From the definition $w=(p+1)u_{1}^{p}$, we get
           \begin{eqnarray}
                \left\Vert w(t)\right\Vert _{H^{s}}    &\leq& C\left\Vert u_{1}(t)\right\Vert _{H^{s}}^{p}          \leq C, \label{estW}  \\
                \left\Vert w_{t}(t)\right\Vert _{H^{s-1}}    &=&\left\Vert p(p+1) u_{1}^{p-1}(t)(u_{1})_{t}(t)\right\Vert _{H^{s-1}}  \nonumber \\
                                                                                             &\leq&   C\left\Vert u_{1}(t)\right\Vert _{H^{s-1}}^{p-1}   \left \Vert (u_{1})_{t}(t)\right\Vert _{H^{s-1}}
                                                                                            \leq C.  \label{estWt}
            \end{eqnarray}
            We have
            \begin{eqnarray}
                    \left\Vert F(t)\right\Vert _{H^{s}}
                                &=&  \left\Vert\big(K_{\delta }^{(2)}-K_{\delta }^{(1)}\big)\big((u_{2})_{x}+(p+1)u_{1}^{p}(u_{2})_{x}\big) \right\Vert _{H^{s}} \nonumber \\
                                &\leq & C\delta^{2k}  \big\Vert (u_{2})_{x}+(p+1)u_{1}^{p}(u_{2})_{x} \big\Vert _{H^{s+2k}} \nonumber \\
                               &\leq & C\delta^{2k}   \big( \left\Vert  (u_{2})_{x}(t)\right\Vert _{H^{s+2k}}+\left\Vert u_{1}(t) \right\Vert _{H^{s+2k}}^{p}\left\Vert  (u_{2})_{x}(t)\right\Vert _{H^{s+2k}}\big)  \nonumber \\
                                 &\leq & C\delta^{2k}\big(\left\Vert  u_{2}(t)\right\Vert _{H^{s+2k+1}}+
                   \left\Vert u_{1}(t) \right\Vert _{H^{s+2k}}^{p}\Vert u_{2}(t)\Vert _{H^{s+2k+1}}\big)\nonumber \\
                     &\leq &      C\delta ^{2k} \label{estF1}
            \end{eqnarray}
            where $\Vert u_{i}(t)\Vert _{H^{s+2k+1}}$ for $i=1,2$ are bounded.     Similarly,
            \begin{eqnarray}
                    \left\Vert G(r)(t)\right\Vert _{H^{s}}
                    &=&\left\Vert (p+1)K_{\delta}^{(2)}\Big(\big(u_{2}^{p}-u_{1}^{p}\big)\big(u_{2}\big)_{x}\Big)(t)\right\Vert_{H^{s}} \nonumber \\
                    &\leq &C  \Vert \alpha_{\delta}^{(2)} \Vert_{L^{1}}  \Vert r(t)\Vert _{H^{s}} \Vert u_{2}(t)\Vert_{H^{s+1}}  \leq C_{G}\Vert r(t)\Vert _{H^{s}}   \label{estF2}
            \end{eqnarray}
             where we have used the facts that $\Vert u_{i}(t)\Vert_{H^{s}}\leq C$   and
            \begin{displaymath}
                    u_{2}^{p}-u_{1}^{p}=(u_{2}-u_{1})(u_{2}^{p-1}+u_{2}^{p-2}u_{1}+\cdots +u_{2}u_{1}^{p-2}+u_{1}^{p-1}).
             \end{displaymath}
        \item[Step 5]
            We now apply Lemma \ref{lem2.2} to the solution $r$ of the initial-value problem (\ref{rho-sys}). Since $\Vert r(0)\Vert _{H^{s}}=0$, by (\ref{energy21})  we have the estimate
             \begin{equation}
                    \Vert r(t)\Vert _{H^{s}}\leq \frac{B}{A}(e^{At}-1)   ~~~ \text{for all}~t\leq T,   \label{restimate}
            \end{equation}
            where $A$ and $B$ are given by  (\ref{energy21A}) (with $\alpha$ replaced by $\alpha_{\delta}^{(1)}$) and  (\ref{energy21B}), respectively.
            Using the estimates on  $\left\Vert w\right\Vert _{H^{s}}$,  $\left\Vert w_{t}\right\Vert_{H^{s-1}}$,   $\left\Vert F \right\Vert _{H^{s}}$  and $\left\Vert G\right\Vert _{H^{s}}$ given by (\ref{estW}),    (\ref{estWt}),   (\ref{estF1}) and (\ref{estF2}), respectively,  in (\ref{energy21A}) and  (\ref{energy21B}), we get from   (\ref{restimate}) that
                 \begin{displaymath}
                     \Vert r(t)\Vert _{H^{s}} =\Vert u_{1}(t) -u_{2}(t) \Vert _{H^{s}}\leq  C\delta^{2k} t   ~~~ \text{for all}~t\leq T.
            \end{displaymath}
            This completes the proof.
\end{description} \end{proof}

\section{Convergence  to the hyperbolic conservation law}\label{sec4}

In this section we  provide a  justification of the convergence of the nonlocal (dispersive)  models  to the hyperbolic conservation law in the limit of vanishing nonlocality, that is, in the zero-dispersion limit.

When the kernel $\alpha$ is taken as the Dirac measure, (\ref{para}) becomes the hyperbolic conservation law
\begin{equation}
     u_{t}+ u_{x}+ (u^{p+1})_{x}=0.   \label{hopf}
\end{equation}%
This  equation  is also called the inviscid Burgers equation or the Hopf equation and  arises in  various fields such as  fluid dynamics, traffic flow, acoustics.  The nonlocal equation (\ref{para}) with a general kernel function may be considered as a  dispersive regularization of (\ref{hopf}).  From now on, we take one of the two kernel functions considered in the previous section as the Dirac measure.   One question for (\ref{hopf})  is whether  solutions of the nonlocal wave equation (\ref{para}) converge to the solution of the non-dispersive equation (\ref{hopf}) when the dispersion parameter $\delta$ tends to zero.  Under rather general assumptions this question has been answered in the following comparison result which  is an immediate consequence of Theorem \ref{theo3.1}.  So, for sufficiently smooth initial conditions, the  solutions of (\ref{para}) and (\ref{hopf}) aproximate  each other with an approximation error of order  $\delta^{2k}$  as   the kernel $\alpha$ approaches the Dirac distribution.
\begin{theorem}\label{theo4.1}
     Suppose that the zeroth-order moment of $\alpha$ is 1 and that the first $(2k-1)$-order moments of $\alpha$ for  some $k\geq 1$ are 0. Let  $s>3/2$ and $u_{0}\in H^{s+2k+1}$   with sufficiently small $\Vert u_{0} \Vert _{H^{s+2k+1}}$.  Suppose also that  $u^{\delta}$ and  $u$ satisfy (\ref{para})  and (\ref{hopf}), respectively,  with the initial data $u^{\delta}(x,0)= u(x,0) =u_{0} (x)$.  Then there are some constants  $C$ and $T>0$  independent of  $\delta $ so that
    \begin{equation}
        \Vert u^{\delta }\left( t\right) -u\left( t\right) \Vert _{H^{s}}   \leq C\delta^{2k}t       \mbox{ \  \  for all \  }t\leq T.    \label{mainresult}
    \end{equation}
\end{theorem}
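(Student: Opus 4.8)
The plan is to deduce Theorem \ref{theo4.1} directly from Theorem \ref{theo3.1} by taking $\alpha^{(1)}=\alpha$ and $\alpha^{(2)}=\delta_{0}$, the Dirac measure at the origin. The first point to check is that $\delta_{0}$ is an admissible kernel in the sense of Section \ref{sec2}: it is an even finite Borel measure with total mass $\int d\delta_{0}=1$, so the local well-posedness result of Theorem \ref{theo2.3} and the uniform bounds of Remark \ref{rem2.4} apply to it exactly as to $\alpha$. Moreover $\widehat{\delta_{0}}\equiv 1$, so the rescaled kernel $(\delta_{0})_{\delta}$ has symbol $\widehat{\delta_{0}}(\delta\xi)=1$; hence $(\delta_{0})_{\delta}=\delta_{0}$ and the associated operator is the identity. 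Therefore the Cauchy problem (\ref{main-a})--(\ref{main-b}) with $i=2$ is nothing but the Cauchy problem for the conservation law (\ref{hopf}) with data $u_{0}$, and by the uniqueness statement of Theorem \ref{theo2.3} its solution is precisely the function $u$ of the theorem; for $i=1$ the problem is (\ref{para}) with solution $u^{\delta}$.

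Next I would verify that the pair $(\alpha,\delta_{0})$ satisfies conditions (C1) and (C2) for the given $k$. The moments of the Dirac measure are $\int x^{j}\,d\delta_{0}=1$ for $j=0$ and $\int x^{j}\,d\delta_{0}=0$ for $j\ge1$. By hypothesis the zeroth-order moment of $\alpha$ equals $1$ and its moments of orders $1,\dots,2k-1$ vanish, so in particular $\int x^{j}\alpha(x)\,dx=\int x^{j}\,d\delta_{0}$ for every $j$ with $0\le j<2k-1$; this is (C1). Condition (C2) is immediate for $\delta_{0}$, since $\int|x|^{2k}\,d\delta_{0}=0$, and for $\alpha$ the requirement $x^{2k}\alpha(x)\in L^{1}(\mathbb{R})$ is understood as part of the standing decay assumptions on $\alpha$ (the same decay that renders the moments in the hypothesis meaningful).

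Having checked the hypotheses, I would simply invoke Theorem \ref{theo3.1}: since $u_{0}\in H^{s+2k+1}$ with $\Vert u_{0}\Vert_{H^{s+2k+1}}$ sufficiently small, there exist $C$ and $T>0$, independent of $\delta$, such that both solutions are defined on $[0,T]$ and $\Vert u^{\delta}(t)-u(t)\Vert_{H^{s}}\le C\delta^{2k}t$ for all $t\le T$, which is the asserted estimate.

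There is no substantial difficulty here, as the statement is a corollary; the only points requiring care are the bookkeeping items noted above — confirming that $\delta_{0}$ lies in the admissible class of kernels so that the theory of Section \ref{sec2} covers the conservation law, that $(\delta_{0})_{\delta}=\delta_{0}$ so the $i=2$ equation is genuinely (\ref{hopf}) and not a $\delta$-dependent regularization of it, and that the vanishing moments of $\alpha$ match those of the Dirac measure up to order $2k-2$ so that (C1) holds. Everything else is furnished verbatim by Theorem \ref{theo3.1}.
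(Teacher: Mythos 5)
Your proposal is correct and follows exactly the route the paper intends: the paper states Theorem \ref{theo4.1} as an immediate consequence of Theorem \ref{theo3.1} obtained by taking one of the two kernels to be the Dirac measure, which is admissible as an even finite Borel measure of total mass one, and your bookkeeping (that $(\delta_{0})_{\delta}=\delta_{0}$ so the second problem is genuinely (\ref{hopf}), and that the moment hypotheses give (C1), with (C2) read as an implicit decay assumption on $\alpha$) fills in precisely the details the paper leaves tacit. Nothing further is needed.
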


We will  consider some  examples of the kernel functions for which   one would expect convergence of the nonlocal (dispersive) solution of (\ref{para}) to the solution of  (\ref{hopf}). As it is expected, the smaller the dispersion parameter, the closer the nonlocal (dispersive) solution of (\ref{para}) is to the  solution of (\ref{hopf}).   We start with considering the kernel function $\alpha_{\delta}(x)$ whose Fourier transform is
\begin{displaymath}
            \widehat{\alpha_{\delta}}(\xi)=\big(1+\delta^{2k} \xi^{2k}\big)^{-1}
\end{displaymath}
with  $k\geq 1$.  The dispersive wave equation corresponding to $\alpha_{\delta}(x)$ is
\begin{equation}
             L u_{t}+ u_{x}+ (u^{p+1})_{x}=0, ~~~~~L=1-(-1)^{k-1}\delta^{2k}D_{x}^{2k}. \label{highBBM}
\end{equation}
As both of the Fourier transforms of $\alpha_{\delta}(x)$ and the Dirac measure  have exactly the same Taylor expansion (around the origin) to order $2k-1$,  the solutions of (\ref{highBBM}) and (\ref{hopf}) approximate each other with an error of order $\delta^{2k}$  by Theorem \ref{theo4.1}.  We recall that, if $\big(\widehat{\alpha_{\delta}}(\xi)\big)^{-1}$ is a polynomial in $\xi$, then  (\ref{para})  takes the form of a differential equation rather than an integro-differential equation.  When  $k=1$,  $\alpha_{\delta}(x)$ is the exponential kernel $\alpha_{\delta}(x)= \frac{1}{2\delta}e^{- \vert x\vert/\delta}$ and   (\ref{highBBM}) reduces to the BBM equation  (\ref{ib}).  Similarly, when $k=2$, $\alpha_{\delta}(x)$ is given by (\ref{ros-ker}) and   (\ref{highBBM}) becomes the Rosenau equation  (\ref{clas}). So the  approximation error in Theorem \ref{theo4.1} will be  of the order of  $\delta^{2}$ and $\delta^{4}$  for the BBM and Rosenau approximations of (\ref{hopf}), respectively.

More generally, we can consider the fractional BBM-type equation
\begin{equation}
    u_{t}+u_{x}+\delta^{2\gamma}(-D_{x}^{2})^{\gamma}u_{t}+ (u^{p+1})_{x}=0, \label{Fib}
\end{equation}%
where  $\gamma >0$ is not necessarily an integer. This equation is of the form (\ref{para}) with the kernel defined by
\begin{displaymath}
            \widehat{\alpha_{\delta}}(\xi)=\big(1+\delta^{2\gamma} \vert\xi\vert^{2\gamma}\big)^{-1}.
\end{displaymath}
Since  $\widehat{\alpha_{\delta}}(\xi)=1-\delta^{2\gamma} \vert\xi\vert^{2\gamma}+\mathcal{O} \big(\vert\xi\vert^{2\gamma+1}\big)$ near the origin, the approximation  error between solutions of the fractional equation (\ref{Fib}) and the hyperbolic conservation law (\ref{hopf})  is order   $\mathcal{O}\big(\delta^{2\gamma}t\big)$ by Theorem \ref{theo4.1}.

We now consider  the rectangular kernel  defined by
\begin{equation*}
           \alpha_{\delta}(x)=\frac{1}{\delta}\left\{
        \begin{array}{cc}
        1 & \text{ for }~|x|\leq \frac{\delta}{2}, \\~\\
        0 & \text{ for }~|x|>\frac{\delta}{2},
        \end{array}%
        \right.
\end{equation*}%
for which   $ \widehat{\alpha_{\delta}}(\xi )=\frac{2}{\delta\xi}\sin ( \frac{\delta\xi }{2})$.   Then (\ref{para}) becomes the differential-difference equation
\begin{equation}
    u_{t}+\nabla _{\delta }^{d}\big(u+u^{p+1}\big)=0  \label{lattice3}
\end{equation}%
 if we use  the difference operator
\begin{equation}
    \nabla _{\delta }^{d}v(x,t)=\frac{1}{\delta}\Big(v(x+\frac{\delta}{2},t)-v(x-\frac{\delta}{2},t)\Big).       \label{lap3}
\end{equation}
 As a member of the class  (\ref{para}), the differential-difference equation (\ref{lattice3}) is the standard second-order (a three-point stencil) central finite-difference scheme in space for  the hyperbolic conservation law (\ref{hopf}). Since   $\widehat{\alpha_{\delta}}(\xi )=1-\delta^{2}\xi^{2}/24+\cdots$ about the origin, Theorem \ref{theo4.1} tells us that  the difference of the solutions of (\ref{lattice3}) and (\ref{hopf}) with the same initial data  tends to zero as $\delta \rightarrow 0$ with a rate of $\delta^{2}$.

As an another example we now consider  the kernel  defined by
\begin{equation*}
         \alpha_{\delta}(x)=\frac{1}{6\delta}\left\{
        \begin{array}{rcc}
         -1 & \text{ for }~&  -\delta \leq x < -\frac{\delta}{2}, \\~\\
          7 & \text{ for }~&  -\frac{\delta}{2} \leq x < \frac{\delta}{2}, \\~\\
         -1 & \text{ for }~&\frac{\delta}{2} \leq x <\delta,   \\~\\
         0   &    ~               &  \text{otherwise}
        \end{array}%
        \right.
\end{equation*}%
for which   $ \widehat{\alpha_{\delta}}(\xi )=\frac{8}{3\delta\xi}\sin ( \frac{\delta\xi }{2})-\frac{1}{3\delta\xi}\sin (\delta\xi)$.   Then (\ref{para}) becomes the differential-difference equation
\begin{equation}
    u_{t}+\widetilde{\nabla}_{\delta }^{d}\big(u+u^{p+1}\big)=0  \label{lattice5}
\end{equation}%
 if we use  the difference operator
\begin{equation}
   \widetilde{\nabla}_{\delta }^{d}v(x,t)=\frac{1}{6\delta}\Big(-v(x+\delta,t)+8v(x+\frac{\delta}{2},t)-8v(x-\frac{\delta}{2},t)+v(x-\delta,t)\Big).     \label{lap5}
\end{equation}
 As a member of the class  (\ref{para}), the differential-difference equation (\ref{lattice5})  is the standard fourth-order (a five-point stencil) central finite-difference scheme in space for the hyperbolic conservation law (\ref{hopf}). Since   $\widehat{\alpha_{\delta}}(\xi )=1-\delta^{4}\xi^{4}/480+\cdots$ about the origin, by Theorem \ref{theo4.1} we know that  the difference of the solutions of (\ref{lattice5}) and (\ref{hopf}) with the same initial data  tends to zero at a quartic rate as $\delta \rightarrow 0$.


\begin{thebibliography}{00}

\bibitem{Erbay2021}  H. A. Erbay, S. Erbay, A. Erkip,
                    A semi-discrete numerical method for convolution type unidirectional wave equations, \textit{J. Comput. Appl. Math.} \textbf{387} (2021)  112496.

\bibitem{Benjamin1972}  T. B. Benjamin, J. L.  Bona,  J. J. Mahony,
                    Model equations for long waves in nonlinear dispersive systems, \textit{Philos. Trans. R. Soc. Lond. Ser. A: Math. Phys. Sci.} \textbf{272} (1972)  47--78.

\bibitem{Rosenau1988}  P. Rosenau,
                    Dynamics of dense discrete systems: {H}igh order effects, \textit{Prog.  Theor.  Phys.} \textbf{79} (1988)  1028--1042.

\bibitem{Erbay2020}  H. A. Erbay, S. Erbay, A. Erkip,
                    Comparison of nonlocal nonlinear wave equations in the long-wave limit, \textit{Appl. Anal.} \textbf{99} (2020)  2670--2679.

 \bibitem{Bona2005}  J. L. Bona, T. Colin, D. Lannes,
                    Long wave approximations for water waves, \textit{Arch. Ration. Mech. Anal.} \textbf{178} (2005)  373--410.

\bibitem{Constantin2009}  A. Constantin, D. Lannes,
                    The hydrodynamical relevance of the Camassa-Holm and Degasperis-Procesi equations,   \textit{Arch. Rational Mech. Anal.} \textbf{192} (2009) 165--186.

\bibitem{Duchene2015} V. Duchene, S. Israwi, R. Talhouk,
                    A new fully justified asymptotic model for the propagation of internal waves in the {C}amassa-{H}olm regime,   \textit{SIAM J. Math. Anal.} \textbf{47} (2015) 240--290.

\bibitem{Lannes2013}  D. Lannes,
                  \textit{The Water Waves Problem: Mathematical Analysis and Asymptotics},  AMS Mathematical Surveys and Monographs \textbf{188} (American Mathematical Society, Providence, Rhode Island, 2013).

 \bibitem{Lannes2012} D. Lannes, F. Linares, J.-C. Saut,
                    {T}he {C}auchy problem for the {E}uler-{P}oisson system and derivation of the {Z}akharov-{K}uznetsov equation,   \textit{Prog. Nonlinear Differ. Equ. Appl.} \textbf{84} (2013) 181--213.

 \bibitem{Kato1988}  T. Kato, G. Ponce,
                    Commutator estimates and the {E}uler and {N}avier-{S}tokes equations, \textit{Comm. Pure Appl. Math.} \textbf{41} (1988)  891--907.


\bibitem{Taylor2011}  M. E. Taylor,
                  \textit{{P}artial {D}ifferential {E}quations II. {Q}ualitative {S}tudies of {L}inear {E}quations},  2nd. (Springer, New York, 2011).

\end{thebibliography}

\end{document}